\newtheorem{theorem}{Theorem}[section]
\newtheorem{proposition}[theorem]{Proposition}
\newtheorem{corollary}[theorem]{Corollary}
\newtheorem{lemma}[theorem]{Lemma}
\newtheorem{exmp}[theorem]{Example}
\newtheorem{cexmp}[theorem]{Counterexample}
\newtheorem{definition}[theorem]{Definition}
\newcommand{\abf}{\mathbf{a}}
\newcommand{\Id}{\mathrm{Id}}
\def\f{\mathbb{F}}
\def\n{\mathbb{N}}
\def\z{\mathbb{Z}}
\def\q{\mathbb{Q}}
\def\r{\mathbb{R}}
\def\cc{\mathbb{C}}
\def\inv{^{-1}}
\definecolor{codegreen}{rgb}{0,0.6,0}
\definecolor{codegray}{rgb}{0.5,0.5,0.5}
\definecolor{codepurple}{rgb}{0.58,0,0.82}
\definecolor{backcolour}{rgb}{0.95,0.95,0.92}
\lstdefinestyle{mystyle}{
    backgroundcolor=\color{backcolour},   
    commentstyle=\color{codegreen},
    keywordstyle=\color{magenta},
    numberstyle=\tiny\color{codegray},
    stringstyle=\color{codepurple},
    basicstyle=\ttfamily\footnotesize,
    breakatwhitespace=false,         
    breaklines=true,                 
    captionpos=b,                    
    keepspaces=true,                 
    numbers=left,                    
    numbersep=5pt,                  
    showspaces=false,                
    showstringspaces=false,
    showtabs=false,                  
    tabsize=2
}
\begin{document}

\title[]{Flasque Meadows}

\author[]{João Dias}
\author[]{Bruno Dinis}

\address[João Dias]{Departamento de Matemática, Universidade de Évora}
\email{joao.miguel.dias@uevora.pt}

\address[Bruno Dinis]{Departamento de Matemática, Universidade de Évora}
\email{bruno.dinis@uevora.pt}

\subjclass[2010]{16U90, 06B15, 13B10}

\keywords{Flasque meadows, rings, directed lattices}

\begin{abstract}
In analogy with flasque sheaves, we introduce the notion of flasque meadow as a common meadow where the transition maps are all surjective. We study some properties of flasque meadows and illustrate them with many examples and counterexamples.

\end{abstract}

\maketitle

\section{Introduction}

The notion of common meadow has gained some attention lately due to recently discovered connections with ring theory and with nonstandard analysis \cite{Dinis_Bottazzi,Dias_Dinis(23),Dias_Dinis(24),Dias_Dinis(Art),DDM(ta)} (see also \cite{Bergstra_Tucker_2024, bergstra2024ringscommondivisioncommon} for other recent developments).  Originally introduced by Bergstra and Tucker in \cite{Bergstra2006}, \emph{meadows} are algebraic structures with two operations, addition and multiplication, where both operations have inverses which are total. This implies in particular that one is allowed to divide by zero. The original idea was to view meadows as abstract data types given by equational axiomatizations \cite{Bergstra2008,BP(20),10.1145/1219092.1219095,bergstra2020arithmetical}. The point is that one is then able to obtain simple term rewriting systems which are easier to automate in formal reasoning  \cite{bergstra2020arithmetical,bergstra2023axioms}. However, and as mentioned above, meadows are also algebraic structures whose study has interest on its own. For our purposes, two different kinds of meadows are particularly relevant: \emph{common meadows}, introduced by Bergstra and Ponse in \cite{Bergstra2015} and \emph{pre-meadows with $\abf$}, introduced by the authors in \cite{Dias_Dinis(23)}.  In these classes of meadows the inverse of zero is an element, denoted $\abf$, which is absorbent for both operations. The interesting feature of these two classes is that they can be decomposed as disjoint unions of rings \cite{Dias_Dinis(23)}, which allows to adapt many of the results of ring theory onto them. Pre-meadows with $\abf$ differ from common meadows as the former do not require the existence of an inverse function. This turns out to be crucial in trying to enumerate finite common meadows \cite{Dias_Dinis(24)}. The two structures are nevertheless related since $P$, a pre-meadow with $\abf$ such that $0\cdot P$ is finite and the partial order in $0\cdot P$ is a total order, is actually a common meadow.

In this paper we introduce a new class of meadows, called \emph{flasque meadows}, inspired by a recently discovered connection between meadows and sheaves \cite{DDM(ta)}. The term \emph{flasque} originates from Sheaf Theory, where a sheaf is said to be flasque if the restriction maps are surjective \cite[p.~67]{Hartshorne}. Flasque sheaves have the nice property of preserving some exact sequences.  We study some basic properties of flasque meadows and illustrate them with many examples and counterexamples. 

\section{Preliminaries}
In this section we recall some definitions and results on common meadows.

\begin{definition}
        A \emph{pre-meadow} is a structure $(P,+,-,\cdot)$ satisfying the following equations
    \begin{multicols}{2}
\begin{enumerate}
\item[$(P_1)$] $(x+y)+z=x+(y+z) $
\item[$(P_2)$] $x+y=y+x $ 
\item[$(P_3)$]  $x+0=x$ 
\item[$(P_4)$] $x+ (-x)=0 \cdot x$
\item[$(P_5)$] $(x \cdot y) \cdot z=x \cdot (y \cdot z)$ 
\item[$(P_6)$]  $x \cdot y=y \cdot x $
\item[$(P_7)$] $1 \cdot x=x$
\item[$(P_8)$] $x \cdot (y+z)= x \cdot y + x \cdot z$
\item[$(P_9)$] $-(-x)=x$
\item[$(P_{10})$] $0\cdot(x+y)=0\cdot x \cdot y$
\end{enumerate}
\end{multicols}

 Let $P$ be a pre-meadow and let $P_z:=\{x\in M\mid 0\cdot x=z\}$. We say that $P$ is a \emph{pre-meadow with $\abf$} if there exists a unique $z\in 0\cdot P$ such that $|P_z|=1$ (denoted by $\abf$) and $x+\abf=\abf$, for all $x\in P$. 
    A \emph{common meadow} is a pre-meadow with $\abf$ equipped with an inverse function $(\cdot)\inv$ satisfying
    \begin{multicols}{2}
\begin{enumerate}
\item[$(M_1)$] $x \cdot x^{-1}=1 + 0 \cdot x^{-1}$
\item[$(M_2)$]$(x \cdot y)^{-1} = x^{-1} \cdot y^{-1}$
\item[$(M_3)$] $(1 + 0 \cdot x)^{-1} = 1 + 0 \cdot x $
\item[$(M_4)$] $ 0^{-1}=\abf$
\end{enumerate}
\end{multicols}
\end{definition}

\begin{exmp}
\begin{enumerate}
    \item The set $P=\{0,1,x,\abf\}$ with the operations
\begin{multicols}{2}
     \begin{table}[H]
\begin{tabular}{|c|clll}
\hline
$+$ & \multicolumn{1}{c|}{$0$} & \multicolumn{1}{l|}{$1$} & \multicolumn{1}{l|}{$x$} & \multicolumn{1}{l|}{$\abf$} \\ \hline
$0$ & $0$                      & $1$                      & $x$                      & $\abf$                      \\ \cline{1-1}
$1$ & $1$                      & $0$                      & $x$                      & $\abf$                      \\ \cline{1-1}
$x$ & $x$                      & $x$                      & $x$                      & $\abf$                                         \\ \cline{1-1}
  $\abf$ &               $\abf$          &                $\abf$         &               $\abf$          &              $\abf$                             \\ \cline{1-1}
\end{tabular}
\end{table}

    \begin{table}[H]
\begin{tabular}{|c|clll}
\hline
$\cdot$ & \multicolumn{1}{c|}{$0$} & \multicolumn{1}{l|}{$1$} & \multicolumn{1}{l|}{$x$} & \multicolumn{1}{l|}{$\abf$} \\ \hline
$0$ & $0$                      & $0$                      & $x$                      & $\abf$                      \\ \cline{1-1}
$1$ & $0$                      & $1$                      & $x$                      & $\abf$                      \\ \cline{1-1}
$x$ & $x$                      & $x$                      & $x$                      & $\abf$                                         \\ \cline{1-1}
  $\abf$ &               $\abf$          &                $\abf$         &               $\abf$          &              $\abf$                             \\ \cline{1-1}
\end{tabular}
\end{table}
\end{multicols}

    is a pre-meadow but not a pre-meadow with $\abf$ since, the fact that $|P_x|=1=|P_\abf|$ would entail $x=\abf$.

    \item The set $M=\{0,1,x,y,\abf\}$ with the operations
    \begin{multicols}{2}
     \begin{table}[H]
\begin{tabular}{|c|cllll}
\hline
$+$                         & \multicolumn{1}{c|}{$0$} & \multicolumn{1}{l|}{$1$} & \multicolumn{1}{l|}{$x$} & \multicolumn{1}{l|}{$y$} & \multicolumn{1}{l|}{$\abf$} \\ \hline
$0$       & $0$& $1$                      & $x$                      & $y$                         & $\abf$      \\ \cline{1-1}
$1$       & $1$ & $0$                      & $y$                      &     $x$                     & $\abf$      \\ \cline{1-1}
$x$       & $x$    & $y$                      & $x$                      &     $y$                     &  $\abf$           \\ \cline{1-1}
\multicolumn{1}{|l|}{$y$} & $y$  &     $x$    & $y$                       &             $x$             &       $\abf$      \\ \cline{1-1}
  $\abf$      &        $\abf$                   &        $\abf$                 &         $\abf$                &      $\abf$ &$\abf$                                   \\ \cline{1-1}
\end{tabular}
\end{table}
     \begin{table}[H]
\begin{tabular}{|c|cllll}
\hline
$\cdot$                     & \multicolumn{1}{c|}{$0$} & \multicolumn{1}{l|}{$1$} & \multicolumn{1}{l|}{$x$} & \multicolumn{1}{l|}{$y$} & \multicolumn{1}{l|}{$\abf$} \\ \hline
$0$       & $0$& $0$                      & $x$                      & $x$                         & $\abf$      \\ \cline{1-1}
$1$       & $0$ & $1$                      & $x$                      &     $y$                     & $\abf$      \\ \cline{1-1}
$x$       & $x$    & $x$                      & $x$                      &     $x$                     &  $\abf$           \\ \cline{1-1}
\multicolumn{1}{|l|}{$y$} & $x$  &     $y$    & $x$                       &             $y$             &       $\abf$      \\ \cline{1-1}
  $\abf$      &        $\abf$                   &        $\abf$                 &         $\abf$                &      $\abf$ &$\abf$                                   \\ \cline{1-1}
\end{tabular}
\end{table}
\end{multicols}

    is a pre-meadow with $\abf$. Indeed, $M_0=\{0,1\}$, $M_x=\{x,y\}$ and $M_\abf=\{\abf\}$. Moreover, $M$ becomes a common meadow by making $1\inv =1$, $y\inv =y$, and $0\inv=x\inv = \abf\inv=\abf$.
     \end{enumerate}
    \end{exmp}

    A \emph{directed lattice of rings} is a lattice where the vertices are labelled by unital commutative rings and the edges are labelled by ring homomorphisms. 

      In \cite{Dias_Dinis(23)} a connection between meadows and directed lattices was unveiled (see Theorem~\ref{T:DirectedLattice} below) which allows to represent meadows in a diagram form. We illustrate this with the following example. 

        \begin{exmp}
    Consider the lattice $L$ over the set $\{1,2,3,4\}$ (on the left)
\[\begin{tikzcd}
	& 0 &&& {(\z)_0} \\
	1 & 2 && {(\z_2)_1} & {(\q)_2} \\
	& 3 &&& {(\r)_3} \\
	& 4 &&& {\{\abf\}}
	\arrow[from=1-2, to=2-1]
	\arrow[from=1-2, to=2-2]
	\arrow["\pi"', from=1-5, to=2-4]
	\arrow["{\iota_1}", from=1-5, to=2-5]
	\arrow[from=2-1, to=4-2]
	\arrow[from=2-2, to=3-2]
	\arrow[from=2-4, to=4-5]
	\arrow["{\iota_2}", from=2-5, to=3-5]
	\arrow[from=3-2, to=4-2]
	\arrow[from=3-5, to=4-5]
\end{tikzcd}\]
    
    A directed lattice of rings $\Gamma = \bigsqcup \Gamma_i$ over $L$ is a labeling of the vertices by unital commutative rings $R_i$ and of the arrows by unital ring homomorphisms.

Here (on the right), $\pi$ is the projection homomorphism to the quotient $\z/(2)\simeq \z_2$, and $\iota_1,\iota_2$ are the inclusion homomorphisms (note that $\iota_2\circ \iota_1$ is the inclusion map of $\z$ into $\r$). The unlabelled arrows correspond to the unique map that exists to the trivial ring, that we denote by $\{\abf\}$, which we usually omit since it is unique. Throughout the paper we will also drop the lattice indices.
\end{exmp}

    We now recall the main result from \cite{Dias_Dinis(23)}, linking rings and common meadows. The set of invertible elements of the ring $R$ is denoted $R^{\times}$.
\begin{theorem}[{\cite[Theorem~3.3]{Dias_Dinis(23)}}]\label{T:DirectedLattice}
        Let $L$ be a lattice and $\Gamma$ a directed lattice of rings over  $L$ such that, for all $i$ in a set of indices $I$ and all $x\in \Gamma_i$ the set:
        $$J_x:=\{j\in I\mid f_{j,i}(x)\in \Gamma_j^{\times}\}$$
        has a greatest element.
          Then, there exists a common meadow $M=\bigsqcup_{i\in L}\Gamma_i$ with the operations 
          \begin{itemize}
                \item $x+_My=f_{i\wedge j,i}(x)+_{i\wedge j}f_{i\wedge j,j}(y)$, where $+_{i\wedge j}$ is the sum in $\Gamma_{i\wedge j}$;
                \item $x \cdot_M y=f_{i\wedge j,i}(x)\cdot_{i\wedge j}f_{i\wedge j,j}(y)$, where $\cdot_{i\wedge j}$ is the product in $\Gamma_{i\wedge j}$.
            \end{itemize}
             such that the lattice $0\cdot M$ is equivalent to $L$.
\end{theorem}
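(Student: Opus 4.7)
The plan is to equip $M = \bigsqcup_{i \in L} \Gamma_i$ with the operations prescribed by the statement---sums and products of $x \in \Gamma_i$ and $y \in \Gamma_j$ are transported into the common ring $\Gamma_{i \wedge j}$ via $f_{i \wedge j, i}$ and $f_{i \wedge j, j}$ and computed there, while the unary minus $-x$ is taken inside $\Gamma_{i(x)}$---and then to check all axioms. The only definition that uses the hypothesis is the inverse: for $x \in \Gamma_i$, set $j_x := \max J_x$, and define $x^{-1}$ to be the inverse of $f_{j_x, i}(x)$ computed in the ring $\Gamma_{j_x}$. We take $\abf$ to be the unique element of the trivial ring at the bottom of $L$, which makes $0^{-1} = \abf$ by construction.

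First I would verify the pre-meadow axioms $(P_1)$--$(P_{10})$. For any identity involving elements $x_1, \dots, x_n$ coming from rings $\Gamma_{i_1}, \dots, \Gamma_{i_n}$, the cocycle condition $f_{a,b} \circ f_{b,c} = f_{a,c}$ lets one push every argument into the single ring $\Gamma_{i_1 \wedge \dots \wedge i_n}$, where the identity reduces to the corresponding commutative ring axiom. The axioms $(P_4)$ and $(P_{10})$ involving multiplication by $0$ reduce to the observation that, by construction, $0 \cdot x$ is the zero of the ring in which the product lives. The pre-meadow-with-$\abf$ structure is immediate from the presence of the trivial ring at the bottom of $L$: the transition maps landing there are necessarily the zero map, so $\abf$ is absorbent.

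Next I would verify the inverse axioms $(M_1)$--$(M_4)$. Axiom $(M_4)$ is by construction, and $(M_3)$ follows because $1 + 0 \cdot x$ is the unit of $\Gamma_{i(x)}$ and hence its own ring-inverse there, giving $j_{1+0\cdot x} = i(x)$. For $(M_1)$, the inequality $j_x \leq i$ places the product $x \cdot x^{-1}$ in $\Gamma_{j_x}$, where it coincides with $1_{\Gamma_{j_x}} = 1 + 0 \cdot x^{-1}$. The interesting axiom is $(M_2)$: one must show $j_{xy} = j_x \wedge j_y$. Using the fact that in a commutative ring a product is a unit if and only if both factors are, together with the cocycle identity, one obtains $J_{xy} = J_x \cap J_y$; since each $J_x$ is a down-set (ring homomorphisms preserve units), the intersection of two down-sets with greatest elements $j_x, j_y$ has greatest element $j_x \wedge j_y$, as desired.

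Finally, the claim that the lattice $0 \cdot M$ is equivalent to $L$ is verified by noting that $0 \cdot x$ equals the zero of $\Gamma_{i(x)}$, so the assignment $i \mapsto 0_{\Gamma_i}$ is a bijection from $L$ onto $0 \cdot M$; it respects meets since $0_{\Gamma_i} \cdot 0_{\Gamma_j} = 0_{\Gamma_{i \wedge j}}$. The main obstacle will be the bookkeeping in $(M_2)$: the maximality clause in the hypothesis is precisely what guarantees that the meadow inverse of a product is the product of the meadow inverses, and securing this requires the two-step argument above---identifying $J_{xy}$ with $J_x \cap J_y$ via the unit-of-a-product lemma, then extracting the greatest element via the lattice structure. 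Once this is in place, the remaining verifications are essentially routine consequences of the ring axioms applied componentwise after transporting everything into a common fiber.
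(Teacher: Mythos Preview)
The paper does not prove this theorem: it is quoted verbatim from \cite[Theorem~3.3]{Dias_Dinis(23)} in the preliminaries section and used as a black box, so there is no in-paper argument to compare your proposal against.

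That said, your outline is correct and is the canonical route. The cocycle identity $f_{a,b}\circ f_{b,c}=f_{a,c}$ indeed reduces every pre-meadow axiom to a ring identity in the common fibre $\Gamma_{i_1\wedge\cdots\wedge i_n}$, and your treatment of $(M_2)$ is exactly right: the equality $J_{xy}=J_x\cap J_y$ follows from the unit-of-a-product lemma after pushing both factors into $\Gamma_k$, each $J_x$ is a down-set because unital ring homomorphisms preserve units, and the meet of the two maxima is then the maximum of the intersection. One small point worth making explicit is that $J_x$ is never empty, since the bottom element of $L$ carries the zero ring (where $0=1$ is its own inverse); this is what makes $0^{-1}=\abf$ come out automatically and guarantees that $j_x$ is always defined. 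With that caveat, your plan would constitute a complete proof.
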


The following corollary is useful to show immediately that certain directed lattices are common meadows, since it entails that if all the vertices of the associated lattice of a pre-meadow with $\abf$ are fields, then it must be a common meadow.
        \begin{corollary}[\cite{Dias_Dinis(Art)}]\label{P:VerticesAreFields}
            Every $P$, pre-meadow with $\abf$ such that $P_{0\cdot z}$ is a field, for all $0\cdot z\in 0\cdot P$, is a common meadow.
        \end{corollary}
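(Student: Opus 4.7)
The plan is to invoke Theorem~\ref{T:DirectedLattice}. Every pre-meadow with $\abf$ carries a natural directed lattice of rings, with vertices the rings $P_z$ for $z\in 0\cdot P$ (the bottom vertex, $P_\abf$, being the trivial ring) and, for $z\le z'$, the transition map $f_{z,z'}\colon P_{z'}\to P_z$, $x\mapsto x+z$, which is a unital ring homomorphism. By Theorem~\ref{T:DirectedLattice}, it then suffices to verify that for every $x\in P_z$ the set $J_x=\{z'\le z\mid f_{z',z}(x)\in P_{z'}^{\times}\}$ admits a greatest element.

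The key algebraic fact I would exploit is that any unital ring homomorphism between two fields is either injective or has trivial codomain: its kernel is an ideal, hence $(0)$ or everything, and the latter forces $1=0$ in the target. Using this, I would split the verification into two cases. If $x=z$ is the additive identity of the field $P_z$, then $f_{z',z}(x)=z'$ is the zero of $P_{z'}$, which lies in $P_{z'}^{\times}$ only when $P_{z'}$ is trivial; hence $J_x=\{\abf\}$, with greatest element $\abf$. Otherwise $x$ is a nonzero element of a field, and for any $z'\le z$ the image $f_{z',z}(x)$ is either nonzero in the field $P_{z'}$ (by injectivity) or equals $\abf$, the unit of the trivial ring; either way it is a unit, so $J_x=\{z'\in 0\cdot P\mid z'\le z\}$, whose greatest element is $z$.

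In both cases $J_x$ admits a greatest element, so Theorem~\ref{T:DirectedLattice} supplies a common meadow structure on $\bigsqcup_{z\in 0\cdot P}P_z=P$ whose addition and multiplication coincide with those already defined on $P$. The main point to watch is the convention at the bottom of the lattice: one must treat $P_\abf=\{\abf\}$ as the trivial ring, in which $\abf$ is simultaneously zero and one and therefore a unit, so that the case analysis above correctly accounts for transition maps whose target is $P_\abf$. Once that convention is in place, the field hypothesis on the non-trivial $P_z$'s is used in precisely the form needed to guarantee the existence of a greatest element in each $J_x$.
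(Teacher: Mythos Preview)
The paper does not supply its own proof of this corollary; it is quoted from \cite{Dias_Dinis(Art)}. Your argument is correct and is precisely the one suggested by the surrounding framework: via Theorem~\ref{T:DirectedLattice} (used in the paper, just before Theorem~\ref{T:flasqueequiva}, as the characterization ``$P$ is a common meadow iff every $J_x$ has a greatest element''), you reduce to checking the $J_x$ condition, and the field hypothesis makes this trivial since a unital homomorphism out of a field into a nontrivial ring is injective. Your handling of the bottom vertex $P_\abf$ as the zero ring (where $\abf$ is a unit) is exactly the needed convention, and your two-case split gives $J_x=\{\abf\}$ when $x$ is the zero of its fibre and $J_x=\{z'\le 0\cdot x\}$ otherwise, with greatest elements $\abf$ and $0\cdot x$ respectively.
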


\begin{exmp}
    Recall that for $p$ prime, there exists a unique field $\f_{p^n}$ with exactly $p^n$ elements. Also, the field $\f_{p^n}$ is an algebraic extension of $\f_{p^m}$ whenever $m|n$. Consider the following directed lattice
    
   \[\begin{tikzcd}
	& {\f_p} \\
	{\f_{p^2}} & {\f_{p^3}} & {\f_{p^5}} & \cdots \\
	{\f_{p^6}} & {\f_{p^{10}}} & {\f_{p^{15}}} & \cdots \\
	\cdots & \cdots & \cdots & \cdots \\
	& {\{\abf\}}
	\arrow[from=1-2, to=2-1]
	\arrow[from=1-2, to=2-2]
	\arrow[from=1-2, to=2-3]
	\arrow[from=1-2, to=2-4]
	\arrow[from=2-1, to=3-1]
	\arrow[from=2-1, to=3-2]
	\arrow[from=2-2, to=3-1]
	\arrow[from=2-2, to=3-3]
	\arrow[from=2-3, to=3-2]
	\arrow[from=2-3, to=3-3]
        \arrow[from=2-3, to=3-4]
        \arrow[from=2-4, to=3-3]
	\arrow[from=3-1, to=4-1]
	\arrow[from=3-1, to=4-2]
	\arrow[from=3-1, to=4-3]
	\arrow[from=3-2, to=4-1]
	\arrow[from=3-2, to=4-2]
	\arrow[from=3-2, to=4-3]
	\arrow[from=3-3, to=4-1]
	\arrow[from=3-3, to=4-2]
	\arrow[from=3-3, to=4-3]
 	\arrow[from=3-3, to=4-4]
  	\arrow[from=3-4, to=4-3]
	\arrow[from=4-1, to=5-2]
	\arrow[from=4-2, to=5-2]
	\arrow[from=4-3, to=5-2]
 	\arrow[from=4-4, to=5-2]
\end{tikzcd}\]
We have that  $M=\bigsqcup_{n\in \n} \f_{p^n}$ is a common meadow, by Corollary \ref{P:VerticesAreFields}.
\end{exmp}

\section{Flasque Meadows}
    As shown in \cite[Proposition~3.4]{Dias_Dinis(Art)}, there exists a class of common meadows $M$ which is completely determined by $M_0$. We will see (in Proposition~\ref{P:Determined}) that in the class of pre-meadows with $\abf$ a similar phenomenon occurs. We recall that, given a pre-meadow $P$,  the transition maps of $P$ are ring homomorphisms $f_{0\cdot w,0\cdot z}:P_{0\cdot w}\to P_{0\cdot z}$. If the maps are surjective, then $P_{0\cdot w}/\ker(f_{0\cdot w,0\cdot z})$ is isomorphic to $P_{0\cdot z}$. The case where the maps are surjective is of great interest since, in that case, $P_0$ defines the entire meadow. 

\begin{definition}
    Let $P$ be a pre-meadow with $\abf$. We say that $P$ is \emph{flasque} if for all $0\cdot z,0\cdot w\in 0\cdot P$ the transition maps $f_{0\cdot w,0\cdot z}:P_{0\cdot w}\to P_{0\cdot z}$ are surjective.
\end{definition}

    The first examples of flasque pre-meadows are the pre-meadows with $\abf$ constructed from a ring, i.e.\ of the form 
    \begin{equation}\label{E:M(R)}
    M(R):=\bigsqcup_{I\trianglelefteq R}R/I,    
    \end{equation}
     for some unital commutative ring $R$ (see \cite{Dias_Dinis(Art)} for more on this class of meadows).
    
    \begin{exmp}
        The directed lattice
            \[\begin{tikzcd}
	& {\z_2\times\z_2} \\
	& {\z_2\times\z_2} \\
	{\z_2} && {\z_2} \\
	& {\{\abf\}}
	\arrow["{\Id_{\z_2\times\z_2}}", from=1-2, to=2-2]
	\arrow["{\pi_1}", from=2-2, to=3-1]
	\arrow["{\pi_2}"', from=2-2, to=3-3]
	\arrow[from=3-1, to=4-2]
	\arrow[from=3-3, to=4-2]
\end{tikzcd}\]

        where the maps $\pi_1$ and $\pi_2$ are the projection maps on the first and second coordinates, respectively, defines a flasque common meadow. 
    \end{exmp}

\begin{exmp}
    Consider the ring $\z_2\times \z_3\times \z_5$. The directed lattice of rings associated with $M(R)$ is
    {\scriptsize
    \[\begin{tikzcd}
	& {\z_2\times\z_3\times\z_5} \\
	{\z_2\times\z_3\times\z_5/(0\times0\times \z_5)} & {\z_2\times\z_3\times\z_5/(0\times \z_3\times 0)} & {\z_2\times\z_3\times\z_5/(\z_2\times 0\times 0)} \\
	{\z_2\times\z_3\times\z_5/(0\times \z_3\times \z_5)} & {\z_2\times\z_3\times\z_5/(\z_2\times 0\times \z_5)} & {\z_2\times\z_3\times\z_5/(\z_2\times \z_3\times 0)} \\
	& {\z_2\times\z_3\times\z_5/(\z_2\times \z_3\times \z_5)}
	\arrow[from=1-2, to=2-1]
	\arrow[from=1-2, to=2-2]
	\arrow[from=1-2, to=2-3]
	\arrow[from=2-1, to=3-1]
	\arrow[from=2-1, to=3-2]
	\arrow[from=2-2, to=3-1]
	\arrow[from=2-2, to=3-3]
	\arrow[from=2-3, to=3-2]
	\arrow[from=2-3, to=3-3]
	\arrow[from=3-1, to=4-2]
	\arrow[from=3-2, to=4-2]
	\arrow[from=3-3, to=4-2]
\end{tikzcd}\]
}

    which is equivalent to
    
    \[\begin{tikzcd}
	& {\z_2\times\z_3\times\z_5} \\
	{\z_2\times\z_3} & {\z_2\times\z_5} & {\z_3\times\z_5} \\
	{\z_2} & {\z_3} & {\z_5} \\
	& {\{\abf\}}
	\arrow[from=1-2, to=2-1]
	\arrow[from=1-2, to=2-2]
	\arrow[from=1-2, to=2-3]
	\arrow[from=2-1, to=3-1]
	\arrow[from=2-1, to=3-2]
	\arrow[from=2-2, to=3-1]
	\arrow[from=2-2, to=3-3]
	\arrow[from=2-3, to=3-2]
	\arrow[from=2-3, to=3-3]
	\arrow[from=3-1, to=4-2]
	\arrow[from=3-2, to=4-2]
	\arrow[from=3-3, to=4-2]
\end{tikzcd}\]
    Where the transition maps are the natural quotient homomorphisms.
\end{exmp}

    \begin{cexmp}\label{CE:Notallareflasque}
    \begin{enumerate}
        \item Not all flasque pre-meadow with $\abf$ are common meadows. Indeed, the pre-meadow with $\abf$ associated with the following directed lattice is not a common meadow
        \[\begin{tikzcd}
	& {\z_2\times\z_2} \\
	& {\z_2\times\z_2} \\
	{\z_2} && {\z_2} \\
	& {\{\abf\}}
	\arrow["{\Id_{\z_2\times\z_2}}", from=1-2, to=2-2]
	\arrow["{\pi_1}", from=2-2, to=3-1]
	\arrow["{\pi_1}"', from=2-2, to=3-3]
	\arrow[from=3-1, to=4-2]
	\arrow[from=3-3, to=4-2]
\end{tikzcd}\].
        \item Not all common meadows are flasque. Indeed the directed lattice
       \[\begin{tikzcd}
	& \z \\
	& {\r\times \r} \\
	\r && \r \\
	& {\{\abf\}}
	\arrow["{1\mapsto (1,1)}", from=1-2, to=2-2]
	\arrow["{\pi_1}"', from=2-2, to=3-1]
	\arrow["{\pi_2}", from=2-2, to=3-3]
	\arrow[from=3-1, to=4-2]
	\arrow[from=3-3, to=4-2]
\end{tikzcd}\]
    is associated with a common meadow. However this meadow is not flasque since the map $1\mapsto (1,1)$ fails to be surjective.

    \end{enumerate}
             \end{cexmp}    

\begin{proposition}\label{P:M(R)isflasque}
    Let $R$ be a commutative ring. Then $M(R)$ is a flasque pre-meadow with $\abf$.
\end{proposition}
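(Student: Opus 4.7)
The plan is to unwind the construction of $M(R)$ and observe that every transition map is a canonical quotient homomorphism, which is automatically surjective. By construction (see equation~\eqref{E:M(R)}), the components of $M(R)$ are indexed by the lattice of ideals of $R$: to each ideal $I\trianglelefteq R$ one associates the quotient ring $R/I$, with the zero ideal corresponding to $R$ itself (at the top of $0\cdot M(R)$) and the improper ideal $R$ corresponding to $\{\mathbf{a}\}$ (at the bottom). The lattice order on $0\cdot M(R)$ is the reverse inclusion order on ideals.

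Concretely, I would first fix two ideals $I,J\trianglelefteq R$ with $I\subseteq J$, so that $R/J$ lies below $R/I$ in the lattice $0\cdot M(R)$. Then I would identify the transition map $f_{0\cdot w,0\cdot z}:R/I\to R/J$ with the canonical projection $x+I\mapsto x+J$. This identification follows directly from the definition of the sum on $M(R)$: adding $x+I$ to the zero $0+J$ of $R/J$ inside $M(R)$ produces exactly the coset $x+J$, which is what the transition map $x\mapsto x+(\text{zero of target})$ prescribes.

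Once the transition maps have been identified as the canonical quotient homomorphisms $R/I\twoheadrightarrow R/J$, surjectivity is a standard fact from elementary ring theory, and the conclusion follows immediately from the definition of flasque. The only real work is the bookkeeping: making precise the correspondence between the ideal lattice of $R$ and $0\cdot M(R)$ (and the reversal of order), plus verifying that the sum in $M(R)$ implements the quotient map. After that, the proof is essentially tautological, which is the expected behaviour for what should be the prototypical class of examples of flasque pre-meadows.
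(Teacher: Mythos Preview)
Your proposal is correct and follows essentially the same approach as the paper: identify the transition maps of $M(R)$ with the canonical projections $R/I\to R/J$ for $I\subseteq J$ and invoke their surjectivity. The paper's proof is simply a one-line version of this, without the explicit bookkeeping on the lattice order and the computation of the transition map via the sum.
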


\begin{proof}
Since the transition maps are projections from $R/I$ to $R/J$ with $I\subseteq J$ then they are surjective, and so $M(R)$ is flasque.
\end{proof}

We now give an alternative condition for a pre-meadow with $\abf$ to be flasque.

\begin{proposition}\label{L:Surjectiveequiva}
    Let $P$ be a pre-meadow with $\abf$ then the following are equivalent:
    \begin{enumerate}
        \item for all $0\cdot z,0\cdot w\in 0\cdot P$ the transition map $f_{0\cdot w,0\cdot z}:P_{0\cdot w}\to P_{0\cdot z}$ is surjective.
        \item for all $0\cdot z\in 0\cdot P$ the transition map $f_{0,0\cdot z}:P_0\to P_{0\cdot z}$ is surjective.
    \end{enumerate}
\end{proposition}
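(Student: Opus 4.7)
The plan is to prove the two implications separately, and the heart of the argument is a single factorisation through $P_0$.

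The implication $(1)\Rightarrow(2)$ is immediate: since $0$ is the minimum of the lattice $0\cdot P$ and belongs to $0\cdot P$, statement (2) is exactly the special case of (1) obtained by fixing $0\cdot w=0$, so the transition maps $f_{0,\,0\cdot z}$ are already among the ones whose surjectivity is asserted by (1).

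For the reverse direction $(2)\Rightarrow(1)$, the key idea is that every transition map factors through the ``bottom'' ring $P_0$. Given $0\cdot w,\,0\cdot z\in 0\cdot P$ with $0\cdot w\leq 0\cdot z$, we have the chain $0\leq 0\cdot w\leq 0\cdot z$ in the lattice, and the functoriality of the transition maps in the directed lattice of rings associated with $P$ (Theorem~\ref{T:DirectedLattice}) yields the identity
\[
f_{0,\,0\cdot z} \;=\; f_{0\cdot w,\,0\cdot z}\circ f_{0,\,0\cdot w}.
\]
Hypothesis (2) tells us that the left-hand side is surjective. The elementary set-theoretic observation that whenever a composition $g\circ f$ is surjective the outer map $g$ must itself be surjective then forces $f_{0\cdot w,\,0\cdot z}$ to be surjective: given any $y\in P_{0\cdot z}$, pick $x\in P_0$ with $f_{0,\,0\cdot z}(x)=y$ and note that $f_{0,\,0\cdot w}(x)\in P_{0\cdot w}$ is a preimage of $y$ under $f_{0\cdot w,\,0\cdot z}$.

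I do not anticipate any real obstacle here; the argument is essentially the observation that the minimum element $0$ of $0\cdot P$ sits below every other element, so the transition maps out of $P_0$ are ``universal'' and, by factorisation, control the surjectivity of all the other transition maps. The only thing that needs to be recorded is the factorisation above, which is a standard property of the transition maps in the associated directed lattice.
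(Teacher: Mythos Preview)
Your proof is correct and follows the same line as the paper's: both directions are handled exactly as you describe, with the nontrivial implication reduced to the factorisation $f_{0,0\cdot z}=f_{0\cdot w,0\cdot z}\circ f_{0,0\cdot w}$ and the folklore fact that surjectivity of a composite forces surjectivity of the outer map. The only cosmetic point is that in the paper's order convention $0$ is the \emph{maximum} of $0\cdot P$ (maps go from larger to smaller elements), not the minimum; your chain should read $0\geq 0\cdot w\geq 0\cdot z$, but this does not affect the argument.
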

\begin{proof}
    The implication $(1)\Rightarrow (2)$ is obvious.
    
    Assume that for all $0\cdot z\in 0\cdot P$ the transition map $f_{0,0\cdot z}:P_0\to P_{0\cdot z}$ is surjective. Let  $0\cdot z,0\cdot w\in 0\cdot P$. We have that $f_{0\cdot z}= f_{0\cdot w,0\cdot z}\circ f_{0\cdot w}$ is surjective, so $f_{0\cdot w,0\cdot z}$ is also surjective.
\end{proof}

If $P$ is a flasque pre-meadow with $\abf$, then $P$ is determined by $P_0$ and the lattice structure of $0\cdot P$. The following proposition reflects this fact.

\begin{proposition}\label{P:Determined}
    Let $P$ be a flasque pre-meadow with $\abf$. Then $P=P_0+0\cdot P$.
\end{proposition}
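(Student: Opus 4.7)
The plan is to prove the two set-theoretic inclusions separately. The inclusion $P_0 + 0\cdot P \subseteq P$ is immediate, since $P_0 \subseteq P$, $0\cdot P \subseteq P$, and $P$ is closed under addition; I would dispatch it in a single line.

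For the reverse inclusion, I would fix an arbitrary $x \in P$ and set $e := 0\cdot x$. By construction $e \in 0\cdot P$ and $x \in P_e$, so the goal reduces to producing some $a \in P_0$ with $x = a + e$. Here I would invoke the transition map $f_{0,e} \colon P_0 \to P_e$ of Theorem~\ref{T:DirectedLattice}, together with the explicit description that this ring homomorphism acts as the additive shift $a \mapsto a + e$, where $e$ plays the role of the zero of the ring $P_e$. Since $P$ is flasque, Proposition~\ref{L:Surjectiveequiva} gives that $f_{0,e}$ is surjective, so there exists $a \in P_0$ with $a + e = x$, which exhibits $x$ as an element of $P_0 + 0\cdot P$.

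The main (and rather minor) obstacle is the bookkeeping around the transition maps: I would want to be sure the reader sees that $f_{0,e}$ is literally ``add $e$'', since this identification is used implicitly but is not stated as a standalone lemma in the preliminaries above. Once that identification is granted, the entire argument collapses to a one-line application of the flasqueness hypothesis via Proposition~\ref{L:Surjectiveequiva}.
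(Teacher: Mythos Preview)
Your proposal is correct and follows essentially the same approach as the paper's proof: both argue that for any $x\in P$ the surjectivity of $f_{0,0\cdot x}$ (via Proposition~\ref{L:Surjectiveequiva}) yields some $y\in P_0$ with $x=y+0\cdot x$, giving $P\subseteq P_0+0\cdot P$, while the reverse inclusion is trivial. Your version is slightly more explicit about the identification of the transition map with the additive shift, but the argument is the same.
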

\begin{proof}
    By Proposition \ref{L:Surjectiveequiva},  for all $x\in P$ there exists some $y\in P_0$ such that  $x=y+0\cdot x$. Then  $P\subseteq P_0+0\cdot P$, and so $P=P_0+0\cdot P$ as we wanted.
\end{proof}

As a consequence of Proposition~\ref{P:Determined}, every $P$, a flasque pre-meadow with $\abf$, decomposes into the sum of a ring $P_0$ with a lattice  $0\cdot P$. Moreover, if $P$ is a flasque pre-meadow with $\abf$ and $P_0$ is a field, then $P$ must be a common meadow, by Proposition \ref{L:Surjectiveequiva}. 

\begin{theorem}\label{T:Decomposition}
    Let $M$ be a common meadow. Then  $M'=M_0\times (0\cdot M\setminus \{\abf\} )\sqcup\{\abf\}$, with the operations
    $$
    \begin{cases}
        (x,0\cdot z)+(y,0\cdot w)= (x+y,0\cdot z\cdot w)\\
        (x,0\cdot z)\cdot(y,0\cdot w)= (x\cdot y,0\cdot z\cdot w),
    \end{cases}
    $$
    where $(x,0\cdot z),(y,0\cdot w)\in M'$, is a common meadow. In particular, if $M$ is flasque and such that $M_0$ is a field, then $M$ is isomorphic to $M'$.
\end{theorem}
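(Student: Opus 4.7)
The plan is to treat the two parts of the theorem separately. The first part, that $M'$ is a common meadow for any common meadow $M$, will follow from Theorem~\ref{T:DirectedLattice} applied to a carefully chosen directed lattice of rings. The second part, that $M \cong M'$ when $M$ is flasque with $M_0$ a field, will follow from exhibiting an explicit isomorphism $\phi : M' \to M$.

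For the first part, the directed lattice to use has as its index lattice $0\cdot M$ itself, with every non-bottom vertex labelled by $M_0$, the bottom $\abf$ labelled by the trivial ring, and every transition map between copies of $M_0$ taken to be the identity. The hypothesis of Theorem~\ref{T:DirectedLattice} that each $J_x$ has a greatest element is easy: if $y \in M_0^{\times}$ then $J_{(y,0\cdot z)}$ is the whole down-set of $0\cdot z$ with greatest element $0\cdot z$, and otherwise $J_{(y,0\cdot z)} = \{\abf\}$. The underlying set of the resulting meadow is precisely $M_0 \times (0\cdot M \setminus \{\abf\}) \sqcup \{\abf\}$, and the operations given by Theorem~\ref{T:DirectedLattice} reduce to those in the statement once one recognises that the meet in $0\cdot M$ is $0\cdot z \cdot w$ and that identity transition maps leave ring elements unchanged.

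For the second part, flasqueness together with Proposition~\ref{L:Surjectiveequiva} makes every transition map $f_{0,0\cdot z} : M_0 \to M_{0\cdot z}$ surjective. Since $M_0$ is a field, the kernel of this map is either trivial or all of $M_0$, and the latter would force $M_{0\cdot z}$ to be the trivial ring, which by the uniqueness of $\abf$ as the singleton fibre only happens for $0\cdot z = \abf$. Hence $f_{0,0\cdot z}$ is a ring isomorphism for every $0\cdot z \neq \abf$. Set $\phi(\abf)=\abf$ and $\phi(y, 0\cdot z) = y + 0\cdot z = f_{0,0\cdot z}(y)$. Surjectivity of $\phi$ is Proposition~\ref{P:Determined}, and injectivity follows because $y + 0\cdot z$ determines $0\cdot z$ (from $0 \cdot (y + 0\cdot z) = 0\cdot z$) and then determines $y$ through the ring isomorphism.

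It remains to verify that $\phi$ preserves the operations. For addition and multiplication this is a direct calculation in the fibre ring $M_{0\cdot z_1 \cdot z_2}$, using that $0\cdot z_i$ is the zero of $M_{0\cdot z_i}$, that $0\cdot z_1 + 0\cdot z_2 = 0\cdot z_1 \cdot z_2$, and that $y_i \cdot (0\cdot z_j) = 0\cdot z_j$ (all consequences of the transition maps being ring homomorphisms). Preservation of the inverse splits into two cases: for $y \neq 0$, the element $y + 0\cdot z$ is the image of a unit under a ring isomorphism, hence invertible with inverse $y^{-1} + 0\cdot z$; for $y = 0$, both $(0,0\cdot z)^{-1}$ in $M'$ and $(0\cdot z)^{-1}$ in $M$ equal $\abf$ by axioms $(M_2)$ and $(M_4)$. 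The main bookkeeping challenge will be handling $\abf$ uniformly — in particular verifying that when $0\cdot z_1 \cdot z_2 = \abf$ the pair formulas collapse correctly to $\abf$ on both sides — but this is a notational rather than a conceptual obstacle.
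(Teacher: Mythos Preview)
Your proof is correct, and your treatment of the second part (the isomorphism when $M$ is flasque with $M_0$ a field) is essentially identical to the paper's: both define $\phi(y,0\cdot z)=y+0\cdot z$, argue that the transition maps $f_{0,0\cdot z}$ are isomorphisms of fields, and verify the homomorphism equations directly. You are in fact slightly more thorough, since you spell out injectivity via $0\cdot(y+0\cdot z)=0\cdot z$ and check preservation of the inverse case by case, whereas the paper leaves these implicit.

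The genuine difference is in the first part. The paper verifies the common meadow axioms for $M'$ by hand: associativity from $M_0$ being a ring and $0\cdot M$ a semigroup, distributivity by direct computation, and an explicit formula for $(x,z)^{-1}$. You instead realise $M'$ as the meadow produced by Theorem~\ref{T:DirectedLattice} applied to the constant directed lattice over $0\cdot M$ with every vertex $M_0$ and every transition map the identity, and you dispatch the $J_x$ hypothesis by noting that identity maps never change invertibility. Your route is slicker and exploits the machinery already in place; the paper's direct verification is more elementary and self-contained, and avoids having to justify that $0\cdot M$ satisfies whatever lattice hypotheses Theorem~\ref{T:DirectedLattice} demands. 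Both arrive at exactly the same inverse function on $M'$.
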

\begin{proof}
    Since $(M_0,+,\cdot)$ is a ring and $0\cdot M$ is a semigroup, both operations are associative on $M'$. Let $(x_i,0\cdot z_i)\in M'$, with $i=1,2,3$. Then
    \begin{align*}
     (x_1,z_1)\cdot ( (x_2,z_2)+ (x_3,z_3) ) &= (x_1\cdot(x_2+x_3),z_1\cdot z_2\cdot z_3) =\\  
     (x_1\cdot x_2+x_1\cdot x_3,z_1\cdot z_2\cdot z_1\cdot z_3)&=(x_1,z_1)\cdot  (x_2,z_2)+  (x_1,z_1)\cdot (x_3,z_3)
    \end{align*}
    and so distributivity holds on $M'$. The elements $(0,0)$ and $(1,0)$ are the identity of the sum and the product, respectively. For $(x,z)\in M'$ we have
    $$
    (x,z)\inv =\begin{cases}
        (x\inv,z),\text{ if }x\in M_0^\times\\
        \abf,\text{ otherwise}
    \end{cases}
    $$
    The remaining common meadow axioms are easy to very. 

    If $M$ is a flasque common meadow such that $M_0$ is a field, then  for all $0\cdot z\in 0\cdot M$ $M_{0\cdot z}\simeq M_0$. Since all surjective homomorphisms between fields are isomorphisms, the transition maps are all isomorphisms. Let $\Phi:M'\to M$ be the function defined by $\Phi(x,0\cdot z)=f_{0,0\cdot z}(x)=x+0\cdot z$, where $f_{0,0\cdot z}$ are the transition maps of $M$. Since all transition maps are isomorphisms,  $\Phi$ must be bijective. Let us see that it is in fact an homomorphism of common meadows. Let $(x,0\cdot z),(y,0\cdot w)\in M'$. Then 
    \begin{align*}
        \Phi(x,0\cdot z)+\Phi(y,0\cdot w)&= x+0\cdot z + y +0\cdot w=\\
        x+y + 0\cdot z\cdot w &= \Phi(x+y,0\cdot z \cdot w). 
    \end{align*}
    Similarly, one shows that $\Phi(x,0\cdot z)\cdot \Phi(y,0\cdot w)=\Phi(x\cdot y,0\cdot z \cdot w)$. Also $\Phi(1,0)=1$. Hence $\Phi$ is an isomorphism of common meadows, and so $M'$ is isomorphic to $M$.
\end{proof}
\begin{exmp}
    Consider the common meadow $M$ (which  is not flasque) associated with the directed lattice on the left. Then $M'$ can be represented by the directed lattice on the right, where the non-trivial arrows correspond to the identity function.
    
    \begin{enumerate}
     \begin{multicols}{2}
        \item[]       
        \begin{tikzcd}
	& \z \\
	{\z\times\z} && \z \\
	\z & \r \\
	& {\{\abf\}} \\
	\arrow["{1\mapsto (1,1)}"', from=1-2, to=2-1]
	\arrow["{\Id_\z}", from=1-2, to=2-3]
	\arrow["{\pi_1}"', from=2-1, to=3-1]
	\arrow["{\iota\circ\pi_2}", from=2-1, to=3-2]
	\arrow[from=2-3, to=4-2]
	\arrow[from=3-1, to=4-2]
	\arrow[from=3-2, to=4-2]
\end{tikzcd}
\item[]
\begin{tikzcd}
	& \z \\
	\z && \z \\
	\z & \z \\
	& {\{\abf\}}
	\arrow[from=1-2, to=2-1]
	\arrow[from=1-2, to=2-3]
	\arrow[from=2-1, to=3-1]
	\arrow[from=2-1, to=3-2]
	\arrow[from=2-3, to=4-2]
	\arrow[from=3-1, to=4-2]
	\arrow[from=3-2, to=4-2]
\end{tikzcd}
  \end{multicols}
  \end{enumerate}
\end{exmp}

The proof of Theorem \ref{T:Decomposition} also shows that, given a common meadow $M$, there exists an homomorphism $\Phi:M_0\times (0\cdot M\setminus \{\abf\} )\sqcup\{\abf\}\to M$ which is surjective if and only if $M$ is flasque. By \cite[Corollary 4.22]{Dias_Dinis(23)}, it follows that $M'/\ker^R(\Phi)$ is isomorphic to $M$.

\begin{proposition}\label{P:idempotent}
    Let $R$ be a unital commutative ring and $S$ an idempotent commutative monoid with a zero $\abf$. Then $M=R\times (S\setminus\{\abf\})\sqcup\{\abf\}$ with the operations:
    $$
    \begin{cases}
        (x,a)+(y,b)=(x+y,a\cdot b)\\
        (x,a)\cdot(y,b)=(x\cdot y,a\cdot b)\\
        (x,a) + \abf =\abf + (x,a)=(x,a) \cdot \abf =\abf \cdot (x,a)=\abf
    \end{cases}
    $$
    defines a common meadow.
\end{proposition}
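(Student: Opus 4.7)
The plan is to mimic the first half of the proof of Theorem~\ref{T:Decomposition}, which establishes exactly this fact in the special case where $R = M_0$ and $S = 0\cdot M$ for some already-existing common meadow $M$. The additive neutral of $M$ is $(0_R, 1_S)$ and the multiplicative neutral is $(1_R, 1_S)$, where $1_S$ denotes the identity of the monoid $S$. The additive inverse is defined by $-(x,a) := (-x, a)$, and the multiplicative inverse by
\[
(x,a)^{-1} := \begin{cases} (x^{-1}, a), & x \in R^{\times} \\ \abf, & x \notin R^{\times} \end{cases}
\]
with $\abf^{-1} := \abf$.

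First, I would verify the pre-meadow axioms $(P_1)$--$(P_{10})$ coordinate-wise. Associativity, commutativity, distributivity, the unit laws, and $-(-x)=x$ all reduce at once to the corresponding ring axioms of $R$ and the commutative-monoid axioms of $S$. The \emph{one} axiom where the idempotency of $S$ is essential is $(P_4)$: the second coordinate of $(x,a)+(-x,a)$ is $a\cdot a$, which must equal $a$ in order to match the second coordinate of $0\cdot (x,a)=(0_R,a)$, and the required identity $a\cdot a=a$ is precisely idempotency. Axiom $(P_{10})$ is a quick computation using $0_R\cdot u=0_R$ in $R$. Throughout, all cases where $a\cdot b=\abf$ in $S$ reduce immediately to the stipulated absorbing behaviour of $\abf$.

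Next, I would check that $M$ is a pre-meadow with $\abf$ in the sense of the definition: the set $0\cdot M$ equals $\{(0_R,a) : a\in S\setminus\{\abf\}\}\cup\{\abf\}$, each fiber $M_{(0_R,a)}$ is in bijection with $R$, and $M_{\abf}=\{\abf\}$ is the unique singleton fiber; the absorption $x+\abf=\abf$ is part of the definition. Finally I would verify $(M_1)$--$(M_4)$ by a short case split on whether the first coordinate is invertible; for instance, $(M_3)$ reduces to the observation that $1+0\cdot(x,a)=(1_R,a)$ and $1_R$ is its own inverse in $R$, and $(M_4)$ is immediate from $0_M^{-1}=(0_R,1_S)^{-1}=\abf$.

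The main point requiring attention, although it is mostly book-keeping rather than substance, is the case distinction between when a second-coordinate product lies in $S\setminus\{\abf\}$ and when it collapses to $\abf$. The only non-automatic use of the hypotheses is the appeal to idempotency of $S$ in axiom $(P_4)$; the hypothesis that $S$ has a zero $\abf$ is precisely what supplies the absorbing element needed to make $M$ a pre-meadow with $\abf$.
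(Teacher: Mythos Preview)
Your approach is exactly the one the paper takes: it omits the proof entirely, stating that it is ``similar to the proof of Theorem~\ref{T:Decomposition}'', which is precisely what you propose to replicate. The overall strategy and the definition of the inverse are correct.

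There is, however, one inaccuracy worth flagging. You assert that $(P_4)$ is \emph{the} axiom where idempotency of $S$ is essential, and that distributivity $(P_8)$ ``reduces at once'' to the ring and monoid axioms. This is not so: computing both sides of $(P_8)$ gives second coordinates $a\cdot b\cdot c$ versus $(a\cdot b)\cdot(a\cdot c)=a^2\cdot b\cdot c$, which agree only because $a^2=a$. The paper itself singles this out immediately after the proposition, remarking that without idempotency distributivity fails up to a correcting term $(0,a)$. The same issue arises in $(M_1)$, where $(x,a)\cdot(x,a)^{-1}=(1_R,a^2)$ must equal $(1_R,a)$. So idempotency is needed in at least three places, not one; your proof would still go through once you carry out the computations, but the diagnosis of where the hypothesis enters is off.
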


 The proof of Proposition \ref{P:idempotent} is similar to the proof of Theorem \ref{T:Decomposition} and so it will be omitted.

Recall that an idempotent commutative monoid $S$ with zero (i.e. with an absorbent element)  is equivalent to a lattice, where  $x\leq y$ if $x\cdot y=y$ for  $x,y \in S$. The directed lattice associated with the meadow $M=R\times (S\setminus\{\abf\})\sqcup\{\abf\}$ is the lattice $S$ where each node is replaced by the ring $R$ and all transition maps are the identity.

\begin{exmp}
    Consider the monoid $S=\{0,x,y,\abf\}$, where $0$ is the identity, and $x\cdot y =\abf$. The common meadow $M=(\z_2\times \z_2)\times (S\setminus\{\abf\})\sqcup\{\abf\}$ is associated with the following directed lattice
    \[\begin{tikzcd}
	& {\z_2\times \z_2} \\
	{\z_2\times \z_2} && {\z_2\times \z_2} \\
	&\{\abf\}
	\arrow["\Id", from=1-2, to=2-1]
	\arrow["\Id"', from=1-2, to=2-3]
	\arrow[from=2-1, to=3-2]
	\arrow[from=2-3, to=3-2]
\end{tikzcd}\]
    Given an ideal $I$ of $M$, the quotient meadow $M/I$ (see \cite[Section~4]{Dias_Dinis(23)} for more on quotients of meadows) is a flasque pre-meadow with $\abf$ which is not necessarily a common meadow. Indeed, taking for instance the ideal  $I=\{((0,0),0),((0,1),x),((0,1),y),\abf\}$, the quotient $M/I$ is not a common meadow, while the quotient $M/J$ by the ideal $J=\{((0,0),0),((0,1),x),((1,0),y),\abf\}$ is in fact a common meadow.
\end{exmp}

Another consequence of the proof of Theorem \ref{T:Decomposition} is that if $S$ is not idempotent then $M=R\times (S\setminus\{\abf\})\sqcup\{\abf\}$  is a structure similar to a common meadow. However this structure is not distributive. In fact we have 
$$ (x,a)\cdot (y,b)+(x,a)\cdot (z,c)= (x,a)\cdot ( (y,b)+(z,c)) + (0,a),$$

and so distributivity holds up to a factor $(0,a)$. The fact that distributivity holds up to a correcting term resembles what happens with the so called \emph{neutrices} and in Nonstandard Analysis (see \cite{KoudjetiBerg(95),DinisBerg(book)}). Neutrices are convex additive subgroups of the hyperreal numbers, for example the set of all infinitesimals. Neutrices have also been studied from a purely algebraic point of view, giving rise to the notions of \emph{assembly}, \emph{association} and \emph{solid}, which are algebraic structures\footnote{These structures are related with usual algebraic structures, but are flexible in the sense that by having individualized neutral elements become stable under some (but not all) translations and shifts. For this reason, some authors have dubbed them ``flexible'' (see e.g.\ \cite{JB(12)}). More recently, assemblies have been shown to be unions of groups and can therefore be seen as semigroups \cite{DDT(24)}. This connection was also the inspiration to establish common meadows as unions of rings.} that possess individualized neutral elements. These can also be seen as ``generalized zeros'' and appear in the distributivity formula as correction terms, since distributivity does not hold in general (see \cite{DinisBerg(11),DinisBerg(ta)}). The paper  \cite{Dinis_Bottazzi} is a first attempt at connecting the two ideas, but a proper connection is still to be done.

By Theorem \ref{T:DirectedLattice}, a pre-meadow $P$ is a common meadow if and only if for all $x\in P$ the set $J_x=\{0\cdot z\in 0\cdot P\mid x+0\cdot z \in P_{0\cdot z}^{\times}\}$ has a greatest element. That is, the study of the elements of the set $J_x$ allows to check if a pre-meadow with $\abf$ is a common meadow. In general, this task is somewhat difficult. The next theorem gives a characterization, for flasque common meadows, which is easier to verify in practice. Indeed, it shows that, in that case, one only needs to verify the condition for the elements in $P_0$, which should be intuitively obvious since for flasque meadows $P_0$ determines the whole meadow.

\begin{theorem}\label{T:flasqueequiva}
    Let $P$ be a  flasque pre-meadow with $\abf$. Then $J_x$ has a greatest element, for all $x\in P$, if and only if it has a greatest element  for all $x\in P_0$. 
\end{theorem}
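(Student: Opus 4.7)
The implication $(1)\Rightarrow (2)$ is immediate since $P_0 \subseteq P$, so the plan is to prove $(2)\Rightarrow (1)$. Given $x \in P$, I would first use flasqueness via Proposition~\ref{L:Surjectiveequiva} to pick $y \in P_0$ with $y + 0\cdot x = x$, and then reduce the question about $J_x$ to the hypothesis on $J_y$.

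The key claim to establish is the identity
\[
J_x = J_y \cap \{0\cdot z \in 0\cdot P : 0\cdot z \leq 0\cdot x\},
\]
where the order on $0\cdot P$ is the natural one, namely $0\cdot z \leq 0\cdot w$ iff $0\cdot z \cdot 0\cdot w = 0\cdot z$. For the left-to-right inclusion, the membership $x + 0\cdot z \in P_{0\cdot z}^{\times}$ tacitly requires $x + 0\cdot z \in P_{0\cdot z}$, so taking $0$-components forces $0\cdot x + 0\cdot z = 0\cdot z$, i.e.\ $0\cdot z \leq 0\cdot x$; the rewriting $x + 0\cdot z = y + 0\cdot x + 0\cdot z = y + 0\cdot z$ then witnesses $0\cdot z \in J_y$. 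The reverse inclusion uses the same rewriting in the other direction.

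Next, I would show that $J_y$ is a down-set in $0\cdot P$: if $0\cdot w \in J_y$ and $0\cdot z \leq 0\cdot w$, the transition map $f_{0\cdot w,0\cdot z}\colon P_{0\cdot w}\to P_{0\cdot z}$, being a unital ring homomorphism, sends the unit $y+0\cdot w$ to $(y+0\cdot w)+0\cdot z = y+0\cdot z$, which is therefore still a unit. Letting $0\cdot m_y$ denote the greatest element of $J_y$ provided by the hypothesis, this yields $J_y = \{0\cdot z : 0\cdot z \leq 0\cdot m_y\}$; substituting into the key identity gives
\[
J_x = \{0\cdot z : 0\cdot z \leq 0\cdot m_y \text{ and } 0\cdot z \leq 0\cdot x\} = \{0\cdot z : 0\cdot z \leq 0\cdot m_y \cdot 0\cdot x\},
\]
so $J_x$ has greatest element $0\cdot m_y \cdot 0\cdot x$, the meet in $0\cdot P$.

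The main obstacle is the bookkeeping step: extracting the constraint $0\cdot z \leq 0\cdot x$ from the mere membership $x + 0\cdot z \in P_{0\cdot z}^{\times}$ requires being careful about which summand $P_{0\cdot w}$ each expression actually lives in. Once that is in place, the rest is a clean intersection of two principal down-sets.
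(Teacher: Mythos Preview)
Your proof is correct and follows essentially the same strategy as the paper's: lift $x$ to some $y\in P_0$ via flasqueness, then exhibit the greatest element of $J_x$ as the meet $0\cdot m_y \cdot 0\cdot x$. Your version is in fact more complete than the paper's, which asserts the inclusion $J_x\subseteq J_y$ and the membership $0\cdot\overline{y}\cdot x\in J_x$ without justification; your explicit identity $J_x = J_y\cap\{0\cdot z\le 0\cdot x\}$ together with the down-set argument for $J_y$ supplies exactly these missing steps, and as a bonus lets you avoid the paper's separate case $0\cdot x\in J_x$.
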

\begin{proof}
    The direct implication is obvious. Let $x\in P$. If $0\cdot x \in J_x$, then $J_x$ has a unique maximal element. So, let us assume that $0\cdot x\notin J_x$. Since $P$ is flasque,  there exists some $y\in P_0$ such that $x=y+0\cdot x$. Note that if $J_{x}\subseteq J_{y}$, then $0\cdot \overline{y}\cdot x\in J_x$, where  $\overline{y}$ is the greatest element of $J_y$. By the definition of $J_x$, for every element $0\cdot z\in J_x$ we have $0\cdot z \leq 0\cdot x$. Since $J_x\subseteq J_y$ we also have $0\cdot z \leq 0\cdot \overline{y}$. Hence $0\cdot z \leq 0\cdot \overline{y}\cdot x$, which implies that $0\cdot \overline{y}\cdot x$ is the greatest element of $J_x$.
\end{proof}
\begin{cexmp}
    In case a pre-meadow with $\abf$ is not flasque one is indeed  required to verify the condition of $J_x$ having a greatest element, for all $x$. To see this, consider the pre-meadow with $\abf$ defined by the directed lattice
\[\begin{tikzcd}
	& {\z_2} \\
	& {\z_2\times \z_2} \\
	{\z_2} && {\z_2} \\
	& {\{\abf\}}
	\arrow["{a\mapsto (a,a)}", from=1-2, to=2-2]
	\arrow["{\pi_1}"', from=2-2, to=3-1]
	\arrow["{\pi_1}", from=2-2, to=3-3]
	\arrow[from=3-1, to=4-2]
	\arrow[from=3-3, to=4-2]
\end{tikzcd}\]
   The elements $x \in P_0\simeq \z_2$ all verify that $J_x$ has a greatest element. However, $J_{(1,0)}$ has two distinct maximal elements. So, the condition fails for an element of $P_{(0,0)}\simeq \z_2\times\z_2$.
\end{cexmp}

    As an application of Theorem \ref{T:flasqueequiva} we will see that there exist non-Artinian rings $R$ such that $M(R)$ is a common meadow. This provides a positive answer to a question raised in \cite{Dias_Dinis(Art)}.

    \begin{exmp}
        Consider the ring $R=\prod_{n\in\n}\z_2$, consisting of countably many copies of $\z_2$. Clearly $R$ is not Artinian. Let us see that $M(R)$ is a common meadow. Since $M(R)$ is flasque (by Proposition \ref{P:M(R)isflasque}), then by Theorem \ref{T:flasqueequiva} all we need to check that for all $x\in R=M(R)_0$ the set $J_x$ has a greatest element. To see that, consider $(a_n)_{n\in \n}\in R$, and define the element $(b_n)_{n\in\n}\in R$ by
        $$
        b_n=\begin{cases}
            b_n=0,& a_n\neq 0\\
            b_n=1,& a_n= 0.
        \end{cases} 
        $$
        Then $(a_n)_{n\in \n} + I$ is invertible in $R/I$ if and only if $(b_n)_{n\in\n}\in I$, and so the ideal generated by $(b_n)_{n\in\n}$ is the smallest ideal $I$ such that $(a_n)_{n\in \n} + I$ is invertible in $R/I$. Then, $I=((b_n)_{n\in\n})$ is the greatest element in $J_{(a_n)_{n\in \n}}$ and so it is a common meadow by Theorem \ref{T:DirectedLattice}.
    \end{exmp}

As shown by Counterexample~\ref{CE:Notallareflasque}, not all pre-meadows with $\abf$ are flasque. However, given a pre-meadow with $\abf$ we can consider the submeadow $P_0+0\cdot P$, which is a flasque pre-meadow with $\abf$. In fact, it is the largest flasque pre-meadow with $\abf$. We then have 
\[\begin{tikzcd}
	{P_0\times 0\cdot P} & {P_0+0\cdot P} & P
	\arrow[two heads, from=1-1, to=1-2]
	\arrow[hook, from=1-2, to=1-3]
\end{tikzcd}\]

where the first map is surjective and the second one is injective. So, given a pre-meadow with $\abf$ we can  use the simpler pre-meadows $P_0\times 0\cdot P$ and $P_0+0\cdot P$ in order to study $P$. 

\begin{proposition}
    Let $P$ be a pre-meadow with $\abf$. Then $\overline{P}=P_0+0\cdot P$ is a flasque pre-meadow with $\abf$. Additionally, $\overline{P}$ is the largest flasque submeadow of $P$ with $\overline{P}_0=P_0$. In particular, if $P$ is a common meadow then $\overline{P}$ is also a common meadow.
\end{proposition}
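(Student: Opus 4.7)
The plan is to verify the four assertions in turn: closure of $\overline{P}$ under the pre-meadow operations, flasqueness together with $\overline{P}_0=P_0$, maximality, and finally preservation of the common-meadow structure.

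First I would derive two auxiliary identities valid in any pre-meadow with $\abf$. From distributivity $(P_8)$ together with $(P_{10})$ and $0\cdot(0\cdot z)=0\cdot z$ one obtains $0\cdot z_1+0\cdot z_2=0\cdot z_1z_2$, and for $y\in P_0$ the computation $y\cdot(0\cdot z)=(y\cdot 0)\cdot z=0\cdot z$ uses only the fact that $P_0$ is a subring. These immediately yield
\[
(y_1+0\cdot z_1)+(y_2+0\cdot z_2)=(y_1+y_2)+0\cdot z_1z_2,
\]
\[
(y_1+0\cdot z_1)(y_2+0\cdot z_2)=y_1y_2+0\cdot z_1z_2,
\]
as well as $-(y+0\cdot z)=(-y)+0\cdot z$, so $\overline{P}$ is closed under $+$, $\cdot$ and $-$. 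It contains $0$, $1$ and $\abf=0+\abf$, and the equational axioms $(P_1)$–$(P_{10})$ are inherited from $P$, so $\overline{P}$ is a pre-meadow with $\abf$.

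Next, by construction every element of $\overline{P}_{0\cdot z}$ has the form $y+0\cdot z$ with $y\in P_0$, so the transition $f_{0,0\cdot z}\colon P_0\to\overline{P}_{0\cdot z}$ is surjective, and Proposition~\ref{L:Surjectiveequiva} delivers flasqueness. If $y+0\cdot z\in\overline{P}_0$, then $0=0\cdot(y+0\cdot z)=0\cdot z$ (using $0\cdot y=0$), forcing the representation to collapse to $y\in P_0$; hence $\overline{P}_0=P_0$. Maximality then follows directly: for any flasque sub-pre-meadow $Q\subseteq P$ with $Q_0=P_0$, flasqueness of $Q$ writes each $q\in Q$ as $q=y+0\cdot q$ with $y\in Q_0=P_0$ and $0\cdot q\in 0\cdot P$, so $q\in\overline{P}$.

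The common-meadow clause is the delicate part. Since $\overline{P}$ is flasque, Theorem~\ref{T:flasqueequiva} reduces the task to showing that $J_x^{\overline{P}}$ has a greatest element for each $x\in\overline{P}_0=P_0$. By hypothesis $J_x^P$ has a greatest element $0\cdot z^\ast$, which provides an inverse of $x+0\cdot z^\ast$ inside $P_{0\cdot z^\ast}$. I then aim to transport that inverse into the subring $\overline{P}_{0\cdot z^\ast}$: flasqueness of $\overline{P}$ at level $0\cdot z^\ast$ supplies a $y'\in P_0$ whose image $y'+0\cdot z^\ast$ serves as the inverse of $x+0\cdot z^\ast$ inside $\overline{P}_{0\cdot z^\ast}$, which places $0\cdot z^\ast$ in $J_x^{\overline{P}}$ and exhibits it as the greatest such element. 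The main obstacle I foresee is making this transport rigorous — producing the witness $y'\in P_0$ requires combining the surjectivity guaranteed by flasqueness of $\overline{P}$ with the inverse function of the ambient common meadow $P$ at level $0\cdot z^\ast$, and verifying that the resulting $y'+0\cdot z^\ast$ really is a multiplicative inverse inside the subring rather than merely inside $P_{0\cdot z^\ast}$.
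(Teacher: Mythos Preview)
Your treatment of the first clauses---closure of $\overline P$ under the operations, flasqueness, the identification $\overline{P}_0=P_0$, and maximality among flasque sub-pre-meadows with top ring $P_0$---is correct and fills in what the paper compresses into ``by construction, one easily checks''. For the common-meadow clause you follow exactly the paper's route: reduce via Theorem~\ref{T:flasqueequiva} to $x\in P_0$ and then try to exploit the greatest element $0\cdot z^\ast$ of $J_x^{P}$.

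The obstacle you flag is genuine and cannot be repaired. Flasqueness of $\overline{P}$ only says that $P_0\to\overline{P}_{0\cdot z^\ast}$ is onto; the inverse of $x+0\cdot z^\ast$ supplied by $P$ lives in $P_{0\cdot z^\ast}$, not a~priori in the subring $\overline{P}_{0\cdot z^\ast}$, and there is no mechanism for pulling it back to $P_0$. Hence you cannot conclude $0\cdot z^\ast\in J_x^{\overline{P}}$. The paper's argument glosses over the same step, tacitly identifying $J_x^{P}$ with $J_x^{\overline{P}}$; in general $J_x^{\overline{P}}\subsetneq J_x^{P}$. In fact the ``in particular'' clause is false as stated. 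On the lattice $0>c>a,b>\abf$ with $a\wedge b=\abf$, put
\[
P_0=\z,\qquad P_c=S^{-1}\z\ \text{with }S=\{n:\gcd(n,35)=1\},\qquad P_a=\z_5,\qquad P_b=\z_7,
\]
using the inclusion $\z\hookrightarrow P_c$ and the reductions $P_c\twoheadrightarrow\z_5$, $P_c\twoheadrightarrow\z_7$. Every $J_x^{P}$ has a greatest element (an integer coprime to $35$ is already a unit in $P_c$, and the remaining cases are routine), so $P$ is a common meadow. But $\overline{P}_c=\Ima(\z\hookrightarrow P_c)\cong\z$, and for $x=6\in P_0$ one gets $J_{6}^{\overline{P}}=\{a,b,\abf\}$, since $6$ is a unit in $\z_5$ and in $\z_7$ yet not in $\overline{P}_c\cong\z$. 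This set has two incomparable maximal elements, so $\overline{P}$ admits no inverse function and is not a common meadow.
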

\begin{proof}
    By construction, one easily checks that $\overline{P}$ is in fact the largest flasque pre-meadow with $\abf$. 

    Suppose now that $P$ is a common meadow. Since $\overline{P}$ is flasque, by Theorem \ref{T:flasqueequiva} we have that $\overline{P}$ is a common meadow if and only if for all $x\in P_0$ the set $J_x$ has a greatest element. Since $J_x\subseteq 0\cdot P=0\cdot \overline{P}$ and $P$ is a common meadow,  $J_x$ has a greatest element and so $\overline{P}$ is also a common meadow.
\end{proof}

\begin{exmp}
Let $M$ be the common meadow associated with the following directed lattice  of rings
\[\begin{tikzcd}
	& \z \\
	& {\r \times \r} \\
	\cc && \r \\
	& {\{\abf\}}
	\arrow["{1\mapsto(1,1)}", from=1-2, to=2-2]
	\arrow["{\iota\circ\pi_1}", from=2-2, to=3-1]
	\arrow["{\pi_2}"', from=2-2, to=3-3]
	\arrow[from=3-1, to=4-2]
	\arrow[from=3-3, to=4-2]
\end{tikzcd}\]

Where $\iota$ is the inclusion map from $\r$ to $\cc$.
The meadow $M$ is not flasque since the map $1\mapsto (1,1)$ is not surjective. The construction of the  largest flasque pre-meadow is done by taking the image of the transition maps. Indeed, the directed lattice
\[\begin{tikzcd}
	& \z \\
	& {\z\times\z} \\
	\z && \z \\
	& {\{\abf\}}
	\arrow["{1\mapsto(1,1)}", from=1-2, to=2-2]
	\arrow["{\pi_1}", from=2-2, to=3-1]
	\arrow["{\pi_2}"', from=2-2, to=3-3]
	\arrow[from=3-1, to=4-2]
	\arrow[from=3-3, to=4-2]
\end{tikzcd}\]
 is a flasque pre-meadow with $\abf$. In this case, it is even a common meadow.
\end{exmp}

\bibliographystyle{siam}
\bibliography{References}
\subsection*{Acknowledgments}
Both authors acknowledge the support of FCT - Funda\c{c}\~ao para a Ci\^{e}ncia e Tecnologia under the project: 10.54499/UIDB/04674/2020, and the research center CIMA -- Centro de Investigação em Matemática e Aplicações. 

The second author also acknowledges the support of CMAFcIO -- Centro de Matem\'{a}tica, Aplica\c{c}\~{o}es Fundamentais e Investiga\c{c}\~{a}o Operacional under the project UIDP/04561/2020.

\end{document}